\documentclass[11pt]{amsart}
\usepackage{amsmath,amssymb,amsthm,mathtools}
\usepackage[margin=1.05in]{geometry}
\usepackage{microtype}

\newtheorem{theorem}{Theorem}[section]
\newtheorem{lemma}[theorem]{Lemma}
\theoremstyle{remark}
\newtheorem{remark}[theorem]{Remark}
\newtheorem{proposition}[theorem]{Proposition}

\newcommand{\Ric}{\operatorname{Ric}}
\newcommand{\Sec}{\operatorname{Sec}}
\newcommand{\II}{\operatorname{II}}
\newcommand{\tr}{\operatorname{tr}}
\newcommand{\RR}{\mathbb{R}}
\newcommand{\cR}{\mathcal{R}}

\newcommand{\eps}{\varepsilon}

\title{An elementary counterexample to Escobar's Steklov conjecture on the three-ball}

\author{Alexandre Girouard}
\address{Département de mathématiques et de statistique,
Université Laval,
Québec, QC, Canada}
\email{alexandre.girouard@mat.ulaval.ca}

\author{Thomas Hélière}
\address{Département de mathématiques et de statistique,
Université Laval,
Québec, QC, Canada}
\email{thomas.heliere.1@ulaval.ca}

\subjclass[2020]{35P15, 53C21, 58J50}
\keywords{Steklov eigenvalues, Escobar's conjecture, Ricci curvature,
convex boundary}

\begin{document}

\begin{abstract}
  We give an explicit polynomial deformation of the Euclidean metric on the unit three-ball which has positive Ricci curvature and strictly convex boundary but violates Escobar's conjectured lower bound for the first nonzero Steklov eigenvalue. Our proof is fully human-verifiable.
\end{abstract}

\maketitle

\section{Introduction}\label{sec:intro}
Let \((M^n,g)\) be a smooth compact Riemannian manifold with nonempty boundary.  The Steklov problem is
\[
 \Delta_g u=0\quad\text{in }M,
 \qquad \partial_\nu u=\sigma u\quad\text{on }\partial M,
\]
where \(\nu\) is the outward unit normal.  Its spectrum is nonnegative and discrete:
\(0=\sigma_0<\sigma_1\le\sigma_2\le\cdots\nearrow\infty\), where each eigenvalue is repeated according to its multiplicity.
See \cite{LevitinMangoubiPolterovich2023,GirouardPolterovich2017,ColboisGirouardGordonSher2024} for introductory and survey material on the spectral geometry of this problem.

In \cite{Escobar1997}, Escobar proved that in dimension at least three, nonnegative Ricci curvature and the lower boundary curvature bound \(\II\ge c g_{\partial M}>0\) imply \(\sigma_1>c/2\).  He subsequently conjectured the sharp inequality
\[
 \Ric_g\ge0,\qquad \II\ge c g_{\partial M}>0
 \quad\Longrightarrow\quad \sigma_1\ge c,
\]
with equality only for a Euclidean ball of radius \(1/c\) (see \cite[p.~115]{Escobar1999}).

Until the independently obtained counterexamples in \cite{SunWangWang2026}, Escobar's conjecture remained open under its original Ricci-only hypotheses (see Remark \ref{rem:SWW} below). Xia and Xiong~\cite{XiaXiong} proved the sharp conclusion under the stronger assumption $\Sec_g\ge0$, while Duncan and Kumar~\cite{DuncanKumar} sharpened Escobar's bound $\sigma_1>c/2$ without imposing a stronger curvature sign condition, obtaining quantitative lower bounds that also depend on geometric data in a collar of the boundary.

\subsection{Main result}

Let
\[
 B^3=\{(s,y_1,y_2)\in\RR\times\RR^2:s^2+y_1^2+y_2^2\le1\}
\]
with Euclidean metric $g_E$.
We use the outward unit normal $\nu$ and define the shape operator by
\[
S(X)=\nabla_X\nu,\qquad X\in T\partial B^3.
\]
Equivalently,
\[
\II(X,Y)=g(SX,Y),
\]
so that $S=g_{\partial B^3}^{-1}\II$.
In particular, the Euclidean unit sphere has shape operator $S=I$ and
\[
 \sigma_1(B^3,g_E)=1,
 \qquad
 \min_{\partial B^3}\lambda_{\min}(S)=1.
\]

\begin{theorem}\label{thm:main}
There is an explicit Cartesian polynomial tensor $\widehat k$ of degree six on a neighbourhood of $\overline{B^3}$ such that, for
$\widehat g_\varepsilon=g_E+\varepsilon\widehat k$,
every sufficiently small $\varepsilon>0$ satisfies
\[
 \Ric_{\widehat g_\varepsilon}>0,
 \qquad
 S_{\widehat g_\varepsilon}\ge c(\varepsilon)I>0,
 \qquad
 \sigma_1(B^3,\widehat g_\varepsilon)<c(\varepsilon),
\]
where
\[
 c(\varepsilon)=\min_{x\in S^2}\lambda_{\min}
 \bigl(S_{\widehat g_\varepsilon}(x)\bigr).
\]
In particular, Escobar's conjecture is false in dimension three.
\end{theorem}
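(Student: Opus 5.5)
The approach is first-order perturbation theory at the round ball, where every quantity is tight: $\Ric_{g_E}=0$, $S=I$, and $\sigma_1=1$ with multiplicity three (eigenfunctions $s,y_1,y_2$). For $\widehat g_\eps=g_E+\eps\widehat k$ we expand to first order in $\eps$.

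\emph{Ricci.} $\Ric_{\widehat g_\eps}=\eps\,\dot\Ric(\widehat k)+O(\eps^2)$, where $\dot\Ric$ is the linearized Ricci operator. It suffices that $\dot\Ric(\widehat k)$ be uniformly positive definite on $\overline{B^3}$. Since $\widehat k$ is polynomial, $\dot\Ric(\widehat k)$ is an explicit polynomial matrix, and positivity can be checked by hand, for instance through leading principal minors or diagonal dominance.

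\emph{Boundary convexity.} $S_{\widehat g_\eps}=I+\eps\dot S+O(\eps^2)$ with $\dot S$ explicit, so $c(\eps)=1+\eps\min_{S^2}\lambda_{\min}(\dot S)+O(\eps^2)$, which is positive for small $\eps$.

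\emph{Steklov eigenvalue.} Because $\sigma_1(g_E)=1$ is triple, we use degenerate perturbation theory. The derivatives of the three eigenvalue branches are the eigenvalues of a $3\times3$ symmetric form $Q$ on $\mathrm{span}\{s,y_1,y_2\}$, obtained from the Hadamard-type variation of the Dirichlet-to-Neumann map. For harmonic $u$ with $\partial_\nu u=u$ on the boundary,
\[
Q(u,u)=\int_{B}\Bigl(-\langle \widehat k,du\otimes du\rangle+\tfrac12(\tr\widehat k)|du|^2\Bigr)-\int_{S^2}\tfrac12(\tr_{S^2}\widehat k)\,u^2 .
\]
The last term comes from the variation of the boundary measure, and all normalizations are taken with respect to $\int_{S^2}u^2$. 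Then $\sigma_1(\widehat g_\eps)\le 1+\eps\lambda_{\min}(Q)+O(\eps^2)$; a Rayleigh quotient with a fixed test function and the first-order expansion already gives this upper bound. The goal is therefore the strict inequality
\[
\lambda_{\min}(Q)<\min_{S^2}\lambda_{\min}(\dot S).
\]
Once it holds, the $O(\eps^2)$ terms are absorbed for $\eps$ small and $\sigma_1<c(\eps)$ follows. All three conditions are conditions on first-order data only.

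\emph{Choice of $\widehat k$.} This is the main obstacle: finding a degree-six polynomial $\widehat k$ that makes $\dot\Ric(\widehat k)>0$ on the whole closed ball while pushing $\lambda_{\min}(Q)$ below the pointwise minimum of $\dot S$.

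Two simplifications guide the search. First, diffeomorphism directions $\mathcal L_Xg_E$ do not change $\sigma_1$, so only their effect on $\dot S$ and $\dot\Ric$ matters; they can be used to normalize. Second, conformal-type perturbations $\widehat k=\phi\,g_E$ give $\dot\Ric=-\tfrac12(\nabla^2\phi+\Delta\phi\,g)$, which makes positivity easy when $\phi$ is concave.

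The plan is to use an ansatz that breaks the symmetry between $s$ and $(y_1,y_2)$: $\widehat k=a(r,s)\,g_E+b(r,s)\,ds^2$, with polynomial $a$ and $b$. The idea is to make the boundary uniformly convex in the sense of the \emph{minimum} of $\dot S$, while the $s$-eigenfunction sees a net decrease through the averaged integral $Q(s,s)$. The point exploited is that $c(\eps)$ is a pointwise minimum whereas $\sigma_1$ is an average, so $Q$ can be lowered by curvature that is concentrated.

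Concretely, we fix low-degree polynomial coefficients with a few free parameters, compute $\dot\Ric$, $\dot S$ and $Q(s,s)$ symbolically (the integrals are moments on the ball and sphere), and choose the parameters so that $Q(s,s)<\min\lambda_{\min}(\dot S)$ and $\dot\Ric>0$. The final verification is done by elementary polynomial inequalities on $\overline{B^3}$ and $S^2$.
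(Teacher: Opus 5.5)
Your first-order reduction is correct and is essentially the paper's. You have $\Ric_{\widehat g_\eps}=\eps\,D\Ric_{g_E}[\widehat k]+O(\eps^2)$, $c(\eps)=1+\eps\min_{S^2}\lambda_{\min}(\dot S_{\widehat k})+O(\eps^2)$, and $\sigma_1\le\cR_\eps[s]=1+\eps\,\cR'_0[s;\widehat k]+O(\eps^2)$, where $\cR'_0[s;\widehat k]$ is your normalized $Q(s,s)$. The paper does not need degenerate perturbation theory. It uses only the fixed test function $s$, whose boundary mean vanishes exactly for $\widehat g_\eps$ because $s\mapsto -s$ is an isometry. It also absorbs the homothety direction by setting $\widehat k=k+2rg_E$ with $r=\cR'_0[s;k]$. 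Your target inequality $Q(s,s)<\min_{S^2}\lambda_{\min}(\dot S)$ then becomes $\dot S_{\widehat k}>0$ (Lemma~\ref{lem:hom}).

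The genuine gap is that the proposal stops exactly where the theorem's content begins. The statement asserts the existence of an explicit degree-six tensor. Everything hinges on showing that $D\Ric_{g_E}[\widehat k]>0$ on $\overline{B^3}$ and $\cR'_0[s;\widehat k]<\min_{S^2}\lambda_{\min}(\dot S_{\widehat k})$ can hold simultaneously. You call this ``the main obstacle'' and then only describe a parameter search: there is no candidate tensor, no computed $\dot\Ric$, $\dot S$ or $Q(s,s)$, and no inequality checked.

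This compatibility is not automatic. By Xia--Xiong, any such $\widehat k$ must have nonpositive first-order sectional curvature on some plane; otherwise $\Sec_{\widehat g_\eps}>0$ for small $\eps$ and their theorem would give $\sigma_1\ge c(\eps)$. The paper's example has $\dot K(\partial_s,\partial_\rho)=-17/25$ at the equator, while its linearized Ricci tensor stays positive everywhere. Whether your smaller family $a\,g_E+b\,ds^2$ contains such a tensor is left entirely open; it omits the mixed term $k_{sa}=sB\,y_a$ and the term $D\,y_ay_b$ of the paper's ansatz.

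The paper closes the gap with the following ingredients:
\begin{itemize}
\item the explicit $A,B,C,D$ of \eqref{eq:A}--\eqref{eq:D};
\item the computation of $R=D\Ric_{g_E}[k]$ in the frame $(\partial_s,\partial_\rho,e_\theta)$, with positivity certified by $P>0$, $R_{ss}>0$ and the barycentric decomposition \eqref{eq:deltadecomp} of $20000\Delta$;
\item the eigenvalues $\widehat a,\widehat b$ of $\dot S_{\widehat k}$, with exact proofs that $\Phi_a,\Phi_b>0$;
\item the moment computation giving $r=-53021/138600$.
\end{itemize}

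Separately, your remark that diffeomorphism directions can be used to normalize is inaccurate. For $X$ tangent to $S^2$, $\mathcal L_Xg_E$ changes none of $\sigma_1$, $\dot S$ (which equals $\mathcal L_X I=0$) or $\dot\Ric$. For transverse $X$ it is a Euclidean domain variation, which does change $\sigma_1$. The only normalization the paper uses is the homothety.
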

The perturbation tensor $\widehat k$ is given explicitly in
Section~\ref{sec:explicit}, see equations~\eqref{eq:kansatz}--\eqref{eq:rdef}.
\begin{remark}\label{rem:SWW}
While this manuscript was being prepared for submission, we became aware of the recent preprint of Sun, Wang, and Wang~\cite{SunWangWang2026}, which disproves Escobar's conjecture in every dimension $n\ge 3$ by means of conformal deformations $g_t=e^{2t\Phi}g_E$
of the Euclidean ball.  At a structural level, the two constructions exploit the same first-order instability of the Euclidean equality case: positive linearized Ricci curvature is combined with a favorable first variation of the boundary convexity and with the coordinate function as a Steklov Rayleigh competitor.  In their conformal setting these requirements reduce to three conditions on the scalar factor $\Phi$, namely positivity of the linearized Ricci tensor, increase of the boundary principal curvatures, and decrease of the coordinate Rayleigh quotient.  Up to a constant conformal factor, equivalently a homothetic normalization, this may be viewed in the same form used here: the coordinate Rayleigh quotient is stationary to first order while the whole boundary shape operator increases strictly.

The actual metric deformations, however, are quite different.  Their examples are conformal to the Euclidean metric, whereas ours is an additive,  nonconformal polynomial perturbation
$\widehat{g}_\varepsilon=g_E+\varepsilon\widehat{k}.$
In particular, the present three-dimensional construction is designed so that all of the essential Ricci, boundary-convexity, and Rayleigh-quotient estimates admit direct exact verification by hand.
\end{remark}

\subsection{A free-boundary consequence}\label{sec:fraser-li}
Fraser and Li~\cite{FraserLi2014} proved that if $M^n$ is a compact orientable Riemannian
manifold with
\[
\Ric_M\ge0,
\qquad
\II_{\partial M}\ge k\,g_{\partial M}>0,
\]
then every properly embedded free-boundary minimal hypersurface $\Sigma$
satisfying either that $\Sigma$ is orientable or that $\pi_1(M)$ is finite
obeys
$\sigma_1(\Sigma)\ge \frac{k}{2}.$
In the Euclidean unit ball they conjectured the sharper
identity $\sigma_1=1$~\cite[Conjecture~3.3]{FraserLi2014}. The following does
not contradict that Euclidean conjecture, but rules out its direct curved-ambient
analogue.

\begin{proposition}\label{prop:fraser-li-disk}
For every sufficiently small $\varepsilon>0$, the disk
$\Sigma=\{y_1=0\}\cap B^3$ is a properly embedded totally geodesic
free-boundary minimal disk in $(B^3,\widehat g_\varepsilon)$ and
\[
 \sigma_1(\Sigma,\widehat g_\varepsilon|_\Sigma)
 \le 1-\frac{93739}{35481600}\,\varepsilon+O(\varepsilon^2)
 <1.
\]
\end{proposition}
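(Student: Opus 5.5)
We have not yet seen the explicit tensor $\widehat k$; the plan relies on symmetry properties we expect it to have, namely invariance under the reflection $R:y_1\mapsto -y_1$, with components $\widehat k_{s y_1}=\widehat k_{y_1y_2}=0$ whenever $y_1=0$.

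\emph{Step 1: symmetry gives the geometric properties.} If $\widehat k$ is invariant under $R$, then $R$ is an isometry of $\widehat g_\varepsilon$ preserving $B^3$. The fixed-point set $\Sigma=\{y_1=0\}\cap B^3$ of an isometric reflection is totally geodesic, hence minimal. Since $R$ preserves $\partial B^3$ and commutes with the outward normal, at points of $\partial\Sigma$ the normal $\nu$ is $R$-invariant, so $\nu\in T\Sigma$. Thus $\Sigma$ meets $\partial B^3$ orthogonally, i.e.\ it is free boundary. Proper embeddedness is clear, since $\Sigma$ is a flat disk meeting the sphere transversally. If $\widehat k$ is not literally $R$-invariant, we instead check directly, from the explicit formulas \eqref{eq:kansatz}--\eqref{eq:rdef}, that the mixed components $\widehat k_{sy_1},\widehat k_{y_1y_2}$ and the normal derivatives of $\widehat k_{ss},\widehat k_{sy_2},\widehat k_{y_2y_2}$ in $y_1$ vanish on $\{y_1=0\}$. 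This again makes the second fundamental form zero.

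\emph{Step 2: test-function bound.} The metric $h_\varepsilon=\widehat g_\varepsilon|_\Sigma=\delta+\varepsilon \kappa$ lives on the unit disk $D$ with coordinates $(s,y_2)$. We use the variational characterization
\[
\sigma_1(\Sigma)\le \frac{\int_D |\nabla u|^2_{h_\varepsilon}\,dA_{h_\varepsilon}}{\int_{\partial D}u^2\,dL_{h_\varepsilon}}
\]
for any $u$ with $\int_{\partial D}u\,dL_{h_\varepsilon}=0$. We take $u=s-m_\varepsilon$, where $m_\varepsilon$ is the $dL_{h_\varepsilon}$-mean of $s$ on $\partial D$; by symmetry under $s\mapsto -s$ it is likely $m_\varepsilon=0$, and in any case $m_\varepsilon=O(\varepsilon)$, which contributes only $O(\varepsilon^2)$. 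At $\varepsilon=0$ the quotient is $\pi/\pi=1$.

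Expanding to first order, with $\kappa=\widehat k|_{T\Sigma}$:
\[
\frac{d}{d\varepsilon}\Big|_0 Q=\frac1\pi\Bigl[\int_D\bigl(-\kappa^{ss}+\tfrac12\tr\kappa\bigr)\,dA-\int_{\partial D}s^2\,\tfrac12\kappa(\tau,\tau)\,dL\Bigr],
\]
where $\tau$ is the unit tangent to the circle. Because $\widehat k$ is polynomial of degree six, both terms are integrals of polynomials over the disk and circle. We evaluate them in polar coordinates using $\int_0^{2\pi}\cos^{2a}\theta\sin^{2b}\theta\,d\theta$ and $\int_0^1 r^{k}\,dr$, which should give $-93739/35481600$. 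Smooth dependence on $\varepsilon$ gives the $O(\varepsilon^2)$ remainder, and the strict inequality $<1$ follows for small $\varepsilon$.

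\emph{Main obstacle.} The main work is the exact bookkeeping in the first-variation integral: restricting the explicit $\widehat k$ to $\{y_1=0\}$ and tracking every monomial to reach the precise rational constant. If the Rayleigh-quotient computation already done for Theorem~\ref{thm:main} with the coordinate $s$ splits into contributions from the disk, we may instead reuse those computed integrals directly.
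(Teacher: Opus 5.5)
Your plan is correct and is essentially the paper's proof. The three ingredients match: the reflection $y_1\mapsto -y_1$ is an exact isometry, since the invariant form $k=A\,ds^2+2sB\,ds\,(y\cdot dy)+C\,|dy|^2+D\,(y\cdot dy)^2$ makes your fallback unnecessary; $s$ has zero boundary mean by $s\mapsto -s$; and the first variation of the Rayleigh quotient of $s$ on the disk yields the constant.

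The only difference is bookkeeping. The paper computes with $k$, obtaining $N'_\Sigma(0)=-56623\pi/76800$ and $D'_\Sigma(0)=-18027\pi/51200$, hence $\cR'_{0,\Sigma}[s;k]=-11833/30720$, and then subtracts $r$ using the $\lambda^{-1/2}$ scaling of the two-dimensional Rayleigh quotient. Your direct use of $\widehat k$ gives the same shift: the $2rg_E$ term cancels in the Dirichlet integrand and contributes exactly $-r$ through the boundary length. So the integrals you left as ``should give'' do evaluate to $-93739/35481600$.
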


Since the main theorem gives $c(\varepsilon)>1$ for all sufficiently small $\varepsilon>0$, the proposition yields
$$
\sigma_1(\Sigma,\widehat g_\varepsilon|_\Sigma)<1<c(\varepsilon).
$$
Thus the lower bound $\sigma_1\ge k/2$ of Fraser--Li remains untouched, but the formally stronger curved-ambient estimate $\sigma_1\ge k$ is false, even for a totally geodesic free-boundary disk in a three-ball arbitrarily close to the Euclidean ball. This does not contradict the Fraser--Li conjecture, which concerns free-boundary minimal surfaces in the Euclidean unit ball.

Since $\Sigma$ is totally geodesic, its Gaussian curvature is the ambient sectional curvature of $T\Sigma$. At $(0,0,1)$ the computation in Remark \ref{rem:section} below gives
\[
K'_\Sigma=-\frac{17}{25}.
\]
Thus the induced disk develops negative Gaussian curvature for
$\varepsilon>0$, as it must: otherwise Escobar's two-dimensional
theorem~\cite[Theorem~1]{Escobar1997}, together with
$k_{\partial\Sigma}\ge c(\varepsilon)$,
would force
$\sigma_1(\Sigma)\ge c(\varepsilon)$,
contradicting the proposition.

\begin{remark}
  The same free-boundary phenomenon is also present in the three-dimensional conformal construction of Sun, Wang, and Wang~\cite{SunWangWang2026}.
Indeed, their symmetry fixes an equatorial disk which is totally geodesic and meets the boundary orthogonally, and the restriction of their deformation to
this disk likewise yields
$\sigma_1(\Sigma)<1<c$
for sufficiently small positive parameter.
\end{remark}

\subsection{Open problem}
The failure of the conjectured constant $1$ leads naturally to the following optimization problem.
For each $n\ge3$, define
\[
 \beta_n
 :=
 \inf\left\{
 \sigma_1(M,g):
 \Ric_g\ge0,\quad
 \II_{\partial M}\ge g_{\partial M}
 \right\},
\]
where the infimum is over smooth compact connected $n$-manifolds with nonempty
boundary.  This normalization entails no loss of generality, since Steklov
eigenvalues and boundary principal curvatures have the same homothetic
scaling.  Escobar's theorem gives
$\beta_n\ge\frac12.$
Our construction shows independently that $\beta_3<1$, while the conformal
counterexamples of Sun, Wang, and Wang~\cite{SunWangWang2026} give
$\beta_n<1$ in every dimension $n\ge3$, and in particular for all $n\ge4$.
Thus
$ \frac12\le\beta_n<1.$
Determining $\beta_n$ is the natural optimization problem left by Escobar's
conjecture.  It is tempting to ask whether $\beta_n=\frac12$ in every
dimension.  Since all presently known counterexamples occur already on
$B^n$, even the corresponding infimum restricted to metrics on the ball is
an intriguing open problem.

\subsection*{Acknowledgments}

We are grateful to Iosif Polterovich for valuable discussions and for his comments on an earlier version of this manuscript.

\section{Variation formulas and homothetic normalization}
Consider a smooth variation of the Euclidean metric
$$g_\varepsilon=g_E+\varepsilon h+O(\varepsilon^2).$$

The linearization of the Ricci tensor is standard; see, for example,
\cite[Theorem~1.174(d)]{Besse1987}.  At the Euclidean metric it reduces to
\begin{equation}\label{eq:linric}
 (D\Ric_{g_E}[h])_{ij}
 =
 \frac12\bigl(
 \partial^p\partial_i h_{pj}
 +\partial^p\partial_j h_{pi}
 -\Delta h_{ij}
 -\partial_i\partial_j\tr h
 \bigr).
\end{equation}

  The first variation of the second fundamental form under a variation of the
ambient metric is also standard; see, for example, \cite[Eq.~(3.4)]{Lott2012}.
With our outward-normal convention it reads
\[
 \dot{\II}_h(X,Y)
 =
 \frac12\bigl(
 (\nabla_\nu h)(X,Y)
 -(\nabla_Xh)(\nu,Y)
 -(\nabla_Yh)(\nu,X)
 +h(\nu,\nu)\II(X,Y)
 \bigr).
\]
Since $S=g_{\partial B^3}^{-1}\II$, differentiation gives
\[
 \langle\dot S_hX,Y\rangle
 =\dot{\II}_h(X,Y)-h(SX,Y).
\]
As $S=I$ and $\II=g_{S^2}$ for the Euclidean unit sphere, we obtain
\begin{equation}\label{eq:shapevar}
 \langle \dot S_hX,Y\rangle
 =\frac12\bigl(
 (\nabla_\nu h)(X,Y)
 -(\nabla_Xh)(\nu,Y)
 -(\nabla_Yh)(\nu,X)
 \bigr)
 -h(X,Y)+\frac12h(\nu,\nu)\langle X,Y\rangle .
\end{equation}

For a fixed smooth function $f$, write
\[
 \cR_\varepsilon[f]
 =\frac{\displaystyle\int_{B^3}|df|_{g_\varepsilon}^2\,dV_{g_\varepsilon}}
 {\displaystyle\int_{S^2}f^2\,dA_{g_\varepsilon}}.
\]
We write
\[
\cR'_0[f;h]
:=
\left.\frac{d}{d\varepsilon}\right|_{\varepsilon=0}
\cR_{\varepsilon}[f]
\]
for the first variation of the Rayleigh quotient of the fixed function $f$
in the metric direction $h$.
If $N_\eps,D_\eps$ are the numerator and denominator, then
\begin{align}
 N'_0&=\int_{B^3}\left(\frac12\tr h\,|df|^2-h(\nabla f,\nabla f)\right)dV,
 \label{eq:Nvar}\\
 D'_0&=\frac12\int_{S^2}f^2\tr_{TS^2}h\,dA,
 \label{eq:Dvar}\\
 \cR'_0[f;h]&=\frac{N'_0D_0-N_0D'_0}{D_0^2}.
 \label{eq:Rvar}
\end{align}

\begin{lemma}\label{lem:hom}
For $h=2ag_E$, $a\in\RR$,
\[
 D\Ric_{g_E}[h]=0,
 \qquad
 \dot S_h=-aI,
 \qquad
 \cR'_0[s;h]=-a.
 \]
Consequently the sign of $\sigma_1-c$ is unchanged by this normalization.
\end{lemma}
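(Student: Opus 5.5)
The plan is to verify the three identities directly from the variation formulas of this section, using that $h=2ag_E$ is a constant multiple of the Euclidean metric. Afterwards we interpret the result as an infinitesimal homothety.

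\emph{Ricci.} In Cartesian coordinates $h_{ij}=2a\delta_{ij}$ is constant. Every term in \eqref{eq:linric} contains two derivatives of $h$, so $D\Ric_{g_E}[h]=0$.

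\emph{Shape operator.} Since $h$ is parallel for the flat connection, all covariant derivative terms in \eqref{eq:shapevar} vanish. What remains is
\[
\langle\dot S_hX,Y\rangle=-2a\langle X,Y\rangle+\tfrac12(2a)\langle X,Y\rangle=-a\langle X,Y\rangle ,
\]
using $h(\nu,\nu)=2a$. Hence $\dot S_h=-aI$.

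\emph{Rayleigh quotient.} For $f=s$ we have $|ds|^2=1$, $\tr h=6a$ and $h(\nabla s,\nabla s)=2a$. Then \eqref{eq:Nvar} gives $N_0'=(3a-2a)|B^3|=a|B^3|$. On the boundary $\tr_{TS^2}h=4a$, so \eqref{eq:Dvar} gives $D_0'=2a\int_{S^2}s^2\,dA$. With $N_0=|B^3|=4\pi/3$ and $D_0=\int_{S^2}s^2=4\pi/3$, formula \eqref{eq:Rvar} yields
\[
\cR_0'[s;h]=\frac{a\cdot\frac{4\pi}{3}\cdot\frac{4\pi}{3}-\frac{4\pi}{3}\cdot 2a\cdot\frac{4\pi}{3}}{\left(\frac{4\pi}{3}\right)^2}=a-2a=-a.
\]
There is also a scaling check: $g=(1+2a\varepsilon)g_E$ scales the numerator by $(1+2a\varepsilon)^{1/2}$ and the denominator by $(1+2a\varepsilon)$. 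The quotient therefore changes at first order by $-a\varepsilon$, in agreement.

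\emph{Consequence.} Adding $2ag_E$ to $h$ is, to first order, replacing $g_\varepsilon$ by $\lambda^2 g_\varepsilon$ with $\lambda^2=1+2a\varepsilon$. Exactly, not just to first order, $\sigma_1$ and all principal curvatures scale by $\lambda^{-1}$, so the sign of $\sigma_1-c$ is invariant. Positivity of Ricci is also unchanged, since $\Ric$ is scale invariant. The first-order computations above are consistent with this: both $\dot S$ and $\cR'$ shift by the same amount $-a$. Consequently the first-order comparison between $\cR'_0[s;h]$ and $\min\lambda_{\min}(\dot S_h)$ is unaffected, and one may normalize $h$ freely by multiples of $g_E$.

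No step is a real obstacle. The only care needed is to get the traces right in the Rayleigh quotient ($3$ in the bulk versus $2$ on the boundary).
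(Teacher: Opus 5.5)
Your three first-variation identities are correct, but you reach them by a different route from the paper. You substitute $h=2ag_E$ directly into \eqref{eq:linric}, \eqref{eq:shapevar} and \eqref{eq:Nvar}--\eqref{eq:Rvar}. The paper instead differentiates, at $\lambda=1$, the exact scaling laws under $g\mapsto\lambda g$: Ricci is invariant, while $S$ and every fixed-function Rayleigh quotient scale by $\lambda^{-1/2}$. Your computation also checks the sign conventions in \eqref{eq:shapevar}. The paper's route needs no formulas and already contains the exact scaling that the last assertion requires.

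The gap is in the ``consequently'' part. You identify the normalized metric with $\lambda^2 g_\eps$ only to first order, and then invoke the \emph{exact} homothety invariance of the sign of $\sigma_1-c$. These two facts do not combine. For the linear family, $g_E+\eps(h+2ag_E)$ and $(1+2a\eps)(g_E+\eps h)$ differ by $2a\eps^2h$. An $O(\eps^2)$ change of metric can flip the sign of $\sigma_1-c$ whenever that quantity vanishes to first order. So nothing exact about the normalized family follows from your argument.

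The missing step is the exact factorization
\[
g_E+\eps(h+2ag_E)=(1+2a\eps)\Bigl(g_E+\tfrac{\eps}{1+2a\eps}\,h\Bigr).
\]
It shows that the normalized metric at parameter $\eps$ is homothetic to the unnormalized one at $\delta=\eps/(1+2a\eps)$. This is an increasing reparametrization of small positive parameters. Since $\sigma_1$ and $c$ scale by the same factor, the sign of $\sigma_1-c$ transfers exactly, and so does positivity of Ricci.

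Your fallback remark is correct: $\cR'_0[s;h]-\min\lambda_{\min}(\dot S_h)$ is unchanged by the normalization. It suffices when that first-order gap is strict, but it is weaker than the stated sign invariance.
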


\begin{proof}
A constant rescaling does not change the Levi-Civita connection or the Ricci tensor as a covariant two-tensor.  Under $g\mapsto\lambda g$, the unit normal scales by $\lambda^{-1/2}$ and $\II$ by $\lambda^{1/2}$, whereas the induced boundary metric scales by $\lambda$.  Hence the shape operator $S=g_{\partial B^3}^{-1}\II$ scales by $\lambda^{-1/2}$; every Steklov eigenvalue (and every fixed-function Rayleigh quotient) scales by the same factor.  Differentiating at $\lambda=1$ gives the three first-variation statements.

For the final assertion, observe the exact factorization
\[
 g_E+\varepsilon(k+2ag_E)
 =
 (1+2a\varepsilon)
 \left(
 g_E+\frac{\varepsilon}{1+2a\varepsilon}k
 \right).
\]
Thus, setting
$\lambda=1+2a\varepsilon,\delta=\frac{\varepsilon}{\lambda},$
we have
\[
 g_E+\varepsilon(k+2ag_E)=\lambda(g_E+\delta k).
\]
Since both $\sigma_1$ and $c$ scale by $\lambda^{-1/2}$ under
$g\mapsto\lambda g$, the sign of $\sigma_1-c$ is unchanged.
\end{proof}

\section{The explicit perturbation and positive Ricci curvature}\label{sec:explicit}

Set
\[
 u=s^2,
 \qquad
 t=y_1^2+y_2^2.
 \]
 Write
\[
 y_1=\rho\cos\theta,\qquad y_2=\rho\sin\theta,
 \qquad \rho=\sqrt{t}.
\]
Away from the axis $\rho=0$, let $\partial_\rho$ denote the Euclidean unit
radial vector in the $y$-plane and let
\[
 e_\theta=\frac1{\rho}\partial_\theta
\]
denote the Euclidean unit angular vector.
For $a,b\in\{1,2\}$, define the symmetric $2$-tensor
$k\in\Gamma\!\left(\operatorname{Sym}^2 T^*\mathbb R^3\right)$
by
\begin{equation}\label{eq:kansatz}
 k_{ss}=A,\qquad
 k_{sa}=sB\,y_a,\qquad
 k_{ab}=C\delta_{ab}+D y_ay_b.
\end{equation}
where
\begin{align}
 A&=\frac{97}{200}-t+\frac7{200}t^2-\frac{27}{40}u+ut+u^2,
 \label{eq:A}\\
 B&=-1-\frac9{50}t-\frac{11}{200}t^2-u+\frac{18}{25}ut+\frac1{100}u^2,
 \label{eq:B}\\
 C&=-1-t-\frac{53}{200}t^2-\frac1{200}u-ut-\frac{133}{200}u^2,
 \label{eq:C}\\
 D&=1-t-\frac1{20}t^2+u-\frac{17}{50}ut+\frac{37}{40}u^2.
 \label{eq:D}
\end{align}
The particular coefficients are chosen so that the three first-order requirements described above hold simultaneously.
Since $A,B,C,D$ are polynomials in $u$ and $t$, the tensor $k$ is a
Cartesian polynomial tensor of degree six.
Set
\begin{equation}\label{eq:rdef}
 r:=-\frac{53021}{138600},
 \qquad
 \widehat k:=k+2r g_E,
 \qquad
 \widehat g_\varepsilon:=g_E+\varepsilon\widehat k.
\end{equation}
The identity
$\mathcal R'_0[s;k]=r$
will be verified in Section~\ref{sec:rayleigh}. 
By Lemma~\ref{lem:hom}, $D\Ric_{g_E}[\widehat k]=D\Ric_{g_E}[k]$ and $\cR'_0[s;\widehat k]=0$.

Let
\[
 R=D\Ric_{g_E}[k]=D\Ric_{g_E}[\widehat k].
 \]
In the orthonormal frame $(\partial_s,\partial_\rho,e_\theta)$, direct differentiation in \eqref{eq:linric} gives
\begin{align}
 R_{ss}&=\frac{t^2+309tu+10u^2-2u+1}{100},\label{eq:Rss}\\
 R_{s\rho}&=s\rho L,
 \qquad
 L=\frac{-34t+185u+300}{100},\label{eq:L}\\
 R_{\rho\rho}&=\frac{-17t^2+50tu-126t+195u^2+598u+601}{200},\label{eq:Rrr}\\
 R_{\theta\theta}&=P
 =\frac{-41t^2+296tu+174t+195u^2+598u+601}{200}.
 \label{eq:P}
\end{align}
Although $(\partial_\rho,e_\theta)$ is undefined at $t=0$, the Cartesian tensor $R$ is smooth there, and the displayed expressions extend continuously to the axis, where
\[
R_{s\rho}=0,
\qquad
R_{\rho\rho}=R_{\theta\theta}.
\]
In the orthonormal frame $(\partial_s,\partial_\rho,e_\theta)$, $R$ consists of the block
\[
 \begin{pmatrix}R_{ss}&s\rho L\\ s\rho L&R_{\rho\rho}\end{pmatrix},
 \qquad (s\rho)^2=ut,
\]
and the entry $P$.

To prove
\[
D\Ric_{g_E}[k]>0
\]
throughout $\overline{B^3}$, it is enough to show that the above
$2\times2$ meridional block is positive definite and that the angular
eigenvalue $P$ is positive.  By Sylvester's criterion, this reduces to
\[
P>0,\qquad
R_{ss}>0,\qquad
\Delta:=R_{ss}R_{\rho\rho}-utL^2>0
\]
on
\[
\mathcal T=\{(u,t):u\ge0,\ t\ge0,\ u+t\le1\}.
\]

The first two inequalities are immediate.  Since $174t\ge41t^2$ on $0\le t\le1$,
\[
 200P\ge601+174t-41t^2\ge601>0.
\]
Also
\[
 100R_{ss}\ge10u^2-2u+1
 =10\left(u-\frac1{10}\right)^2+\frac9{10}>0.
\]

For the determinant, put $\lambda=1-u-t\ge0.$
We rewrite $20000\Delta$ as a homogeneous quartic in the nonnegative
barycentric variables $u,t,\lambda$, with the aim of expressing it as a
sum of manifestly nonnegative terms.  Namely, every monomial $u^pt^q$
of degree $d<4$ is replaced by
\[
u^pt^q(u+t+\lambda)^{4-d}.
\]
Since $u+t+\lambda=1$ on $\mathcal T$, this does not change the value of
the polynomial there.  Grouping the resulting quartic by powers of
$\lambda$ gives the exact identity
\begin{align}
 20000\Delta={}&
 u^2\bigl(12546u^2-24188ut+23586t^2\bigr)
 +3458ut^3+916t^4\notag\\
 &+\lambda u\bigl(16200u^2-13328ut+20332t^2\bigr)
 +3068\lambda t^3\notag\\
 &+\lambda^2\bigl(6803u^2+11033ut+3812t^2\bigr)\notag\\
 &+\lambda^3(1800u+2278t)+601\lambda^4.
 \label{eq:deltadecomp}
\end{align}
After substituting $\lambda=1-u-t$, this is simply a polynomial identity in $u,t$, so it may be checked by expansion.  The two binary quadratic forms in the first two lines are positive definite, since their leading coefficients are positive and
\[
 4\cdot12546\cdot23586-24188^2=598580480>0,
 \qquad
 4\cdot16200\cdot20332-13328^2=1139878016>0.
\]
All remaining displayed terms have nonnegative coefficients.  The right-hand side cannot vanish: if $\lambda>0$, the last term is positive; if $\lambda=0$, then $u+t=1$, and either $u>0$, when the first summand is positive, or $u=0,t=1$, when $916t^4>0$.  Thus $\Delta>0$ throughout $\mathcal T$.
Together with the positivity of
$P$ and $R_{ss}$, Sylvester's criterion therefore gives
\[
 D\Ric_{g_E}[\widehat k]>0
 \qquad\text{on }\overline{B^3}.
\]
Since $\overline{B^3}$ is compact, there exists $\delta>0$ such that
$ D\Ric_{g_E}[\widehat k]\ge \delta g_E$
throughout the closed ball.  The Ricci tensor depends smoothly on the metric
and its first two derivatives, and hence
\[
 \Ric_{\widehat g_\varepsilon}
 =
 \varepsilon D\Ric_{g_E}[\widehat k]+O(\varepsilon^2)
\]
uniformly on $\overline{B^3}$.  It follows that
$ \Ric_{\widehat g_\varepsilon}>0$
for all sufficiently small $\varepsilon>0$.

\section{Strict first-order boundary convexity}
On $S^2\cap\{\rho>0\}$, write $\rho^2=1-u$.  Since
$ \nu=s\,\partial_s+\rho\,\partial_\rho, s^2+\rho^2=1$,
the vectors
$ w=\rho\,\partial_s-s\,\partial_\rho, e_\theta$
form a Euclidean orthonormal tangent frame.  The formulas below extend to
the poles $\rho=0$ by continuity.

We first justify that $\dot S_k$ is diagonal in this frame.  Fix a boundary point with $\rho>0$ and, after an $O(2)$ rotation, suppose it has $y_2=0$, $y_1>0$.  Reflection in $y_2$ preserves $g_E$ and $k$, hence preserves $\dot S_k$; it fixes the point and $w$ and sends $e_\theta$ to $-e_\theta$.  Therefore
\[
 \dot S_k(w,e_\theta)=-\dot S_k(w,e_\theta)=0.
\]
The poles follow by continuity.

Substitution of \eqref{eq:A}--\eqref{eq:D} into the shape-operator variation formula \eqref{eq:shapevar} gives the two eigenvalues
\begin{align}
 a(u)&:=\dot S_k(w,w)
 =\frac{4941u^4-5205u^3+1651u^2-146u-149}{400},
 \label{eq:a}\\
 b(u)&:=\dot S_k(e_\theta,e_\theta)
 =\frac{549u^4+341u^3+429u^2-78u-149}{400}.
 \label{eq:b}
\end{align}
As a check, $a(1)=b(1)=273/100$, as required by isotropy at the poles.

By \eqref{eq:rdef} and Lemma~\ref{lem:hom}, the eigenvalues of $\dot S_{\widehat k}$ are
\begin{equation}\label{eq:abhat}
 \widehat a(u)=a(u)-r=\frac{\Phi_a(u)}{277200},
 \qquad
 \widehat b(u)=b(u)-r=\frac{\Phi_b(u)}{277200},
\end{equation}
where
\begin{align}
 \Phi_a(u)&=3424113u^4-3607065u^3+1144143u^2-101178u+2785,
 \label{eq:Phia}\\
 \Phi_b(u)&=380457u^4+236313u^3+297297u^2-54054u+2785.
 \label{eq:Phib}
\end{align}
For $u\ge0$, the first two terms of $\Phi_b$ are nonnegative, while the remaining quadratic is positive because
\[
 54054^2-4\cdot297297\cdot2785=-390053664<0.
\]
Thus $\Phi_b(u)>0$ on $[0,1]$.
For $\Phi_a$ we use the exact completion-of-squares identity
\[
\begin{aligned}
\Phi_a(u)
={}&3424113
\left(
u^2-\frac{1735}{3294}u+\frac1{36}
\right)^2\\
&+\frac{967813}{244}
\left(
u-\frac{3111}{25138}
\right)^2
+\frac{8264203}{100552}.
\end{aligned}
\]
Hence $\Phi_a(u)>0$ for every $u\in\mathbb R$.

Therefore
\begin{equation}\label{eq:Sdotpos}
 \dot S_{\widehat k}>0
 \qquad\text{everywhere on }S^2.
\end{equation}

Set $\eta=\min_{0\le u\le1}\{\widehat a(u),\widehat b(u)\}>0$.  The expansion $S_{\widehat g_\varepsilon}=I+\varepsilon\dot S_{\widehat k}+O(\varepsilon^2)$ is uniform in operator norm on the compact boundary.
Therefore, uniformly for $x\in S^2$,
\[
\lambda_{\min}\bigl(S_{\widehat g_\varepsilon}(x)\bigr)
=
1+\varepsilon\lambda_{\min}
\bigl(\dot S_{\widehat k}(x)\bigr)
+O(\varepsilon^2).
\]
Since the minimum defining $\eta$ is attained, it follows that
\[
c(\varepsilon)=1+\eta\varepsilon+O(\varepsilon^2)>1
\]
for all sufficiently small positive $\varepsilon$.

\section{A stationary coordinate Rayleigh quotient}\label{sec:rayleigh}

The reflection $\iota(s,y)=(-s,y)$ is an isometry of $g_E+\varepsilon k$ and of $\widehat g_\varepsilon$.  Hence the boundary area measure is $\iota$-invariant, while $s\circ\iota=-s$, and therefore
\begin{equation}\label{eq:mean0}
 \int_{S^2}s\,dA_{\widehat g_\varepsilon}=0
\end{equation}
for every sufficiently small $\varepsilon$.  Thus $s$ is an admissible trial function in the variational characterization of $\sigma_1$ for the perturbed metric; no harmonicity of $s$ for $\widehat g_\varepsilon$ is required.

The elementary moments used below are
\begin{equation}\label{eq:moments}
 \int_{B^3}u^pt^q\,dV
 =\frac{\pi}{q+1}\,
 \mathrm B\!\left(p+\frac12,q+2\right),
 \qquad
 \int_{S^2}u^p\,dA=\frac{4\pi}{2p+1},
\end{equation}
where
\[
 \mathrm B(a,b)=\int_0^1 x^{a-1}(1-x)^{b-1}\,dx
\]
is the Euler beta function.
For the unnormalized tensor $k$, the two integrands in \eqref{eq:Nvar}--\eqref{eq:Dvar} for $f=s$ are
\begin{equation}\label{eq:Nintegrand}
 \frac12\tr k-k_{ss}
 =\frac{-10t^3-68t^2u-313t^2+185tu^2-400tu-466u^2+133u-497}{400},
\end{equation}
and, on $S^2$,
\begin{equation}\label{eq:Dintegrand}
 \frac12s^2\tr_{TS^2}k
 =\frac{-549u^5+208u^4-261u^3+483u^2-549u}{400}.
\end{equation}
Using \eqref{eq:moments},
\begin{equation}\label{eq:NDprime}
 N'_0=-\frac{47602\pi}{23625},
 \qquad
 D'_0=-\frac{260713\pi}{173250}.
\end{equation}
At $\varepsilon=0$, the numerator and denominator of $\cR_\varepsilon[s]$ are both $4\pi/3$.

Thus
\begin{equation}\label{eq:rcomputed}
 \cR'_0[s;k]
 =\frac{N'_0-D'_0}{4\pi/3}
 =-\frac{53021}{138600}=r.
\end{equation}
Since $\widehat k=k+2r g_E$, Lemma~\ref{lem:hom} gives
\begin{equation}\label{eq:Rstationary}
 \cR'_0[s;\widehat k]=0,
 \qquad
 \cR_\varepsilon[s]=1+O(\varepsilon^2).
\end{equation}

\begin{proof}[Proof of Theorem~\ref{thm:main}]
Since $\widehat k$ is smooth on a neighbourhood of $\overline{B^3}$,
the metric
$\widehat g_\varepsilon=g_E+\varepsilon\widehat k$
is Riemannian for all sufficiently small $|\varepsilon|$.  The Ricci
calculation above showed that
$D\Ric_{g_E}[\widehat k]>0$
uniformly on $\overline{B^3}$; hence
\[
\Ric_{\widehat g_\varepsilon}
=
\varepsilon D\Ric_{g_E}[\widehat k]+O(\varepsilon^2)>0
\]
for all sufficiently small $\varepsilon>0$.  The boundary-shape
calculation and the Rayleigh-quotient calculation give, respectively,
\[
c(\varepsilon)=1+\eta\varepsilon+O(\varepsilon^2),
\qquad
\cR_\varepsilon[s]=1+O(\varepsilon^2),
\qquad
\eta>0.
\]
By \eqref{eq:mean0}, $s$ is admissible for the first nonzero Steklov Rayleigh quotient, so
\[
 \sigma_1(B^3,\widehat g_\varepsilon)
 \le\cR_\varepsilon[s]
 <c(\varepsilon)
\]
for every sufficiently small positive $\varepsilon$.  In particular the boundary is strictly convex, indeed with all principal curvatures $>1$.
\end{proof}

\begin{remark}\label{rem:section}
The example necessarily leaves the class $\Sec\ge0$.  In dimension three, if $(e_1,e_2,e_3)$ is an orthonormal frame, then
\[
 \Ric(e_1,e_1)=K(e_1,e_2)+K(e_1,e_3),
\]
and the two analogous identities obtained by cycling the indices give
\begin{equation}\label{eq:KfromRic}
 K(e_1,e_2)
 =\frac12\bigl(\Ric(e_1,e_1)+\Ric(e_2,e_2)-\Ric(e_3,e_3)\bigr).
\end{equation}
Since the Euclidean curvature tensor vanishes, the same relation holds for the first variations when the frame is fixed at $\varepsilon=0$.  At the boundary equator $(u,t)=(0,1)$, taking $(e_1,e_2,e_3)=(\partial_s,\partial_\rho,e_\theta)$ and using \eqref{eq:Rss}--\eqref{eq:P} gives
\[
 \dot K(\partial_s,\partial_\rho)
 =\frac12\bigl(R_{ss}+R_{\rho\rho}-P\bigr)
 =\frac12\left(\frac2{100}+\frac{458}{200}-\frac{734}{200}\right)
 =-\frac{17}{25}<0.
\]
Thus the construction lies precisely in the gap between nonnegative Ricci curvature and nonnegative sectional curvature, consistently with the theorem of Xia--Xiong.  The homothetic normalization makes the complementary spectral mechanism equally explicit: it leaves the Ricci variation unchanged, makes the coordinate Rayleigh quotient stationary, and makes every boundary principal curvature increase strictly to first order.
\end{remark}

\begin{proof}[Proof of Proposition \ref{prop:fraser-li-disk}]
Writing $y=(y_1,y_2)$, the tensor has the invariant form
\[
 k=A\,ds^2+2sB\,ds\,(y\!\cdot\!dy)+C\,|dy|^2+D\,(y\!\cdot\!dy)^2,
\]
with $A,B,C,D$ depending only on $s^2$ and $|y|^2$. Hence
$(s,y_1,y_2)\mapsto(s,-y_1,y_2)$ is an exact isometry of
$\widehat g_\varepsilon$. Its fixed set $\Sigma$ is totally geodesic; since
the reflection preserves $\partial B^3$, the ambient outward normal along
$\partial\Sigma$ is fixed and therefore tangent to $\Sigma$. Thus $\Sigma$
meets $\partial B^3$ orthogonally.

On $\Sigma$, put $u=s^2$, $t=y_2^2$. The induced trace of $k$ is
$T_\Sigma=A+C+tD$. For the Rayleigh quotient of $s$, the Euclidean numerator
and denominator are both $\pi$, while
\[
 N'_\Sigma(0)=\int_{B^2}\left(\frac12T_\Sigma-A\right)dV
 =-\frac{56623\pi}{76800},\qquad
 D'_\Sigma(0)=\frac12\int_{S^1}s^2\,\operatorname{tr}_{TS^1}k\,dA
 =-\frac{18027\pi}{51200}.
\]
Thus $\cR'_{0,\Sigma}[s;k]=-11833/30720$. On a surface, a homothety
$g\mapsto\lambda g$ multiplies the Steklov Rayleigh quotient by
$\lambda^{-1/2}$; since $\widehat k=k+2rg_E$ with
$r=-53021/138600$,
\[
 \cR'_{0,\Sigma}[s;\widehat k]
 =-\frac{11833}{30720}-r=-\frac{93739}{35481600}<0.
\]
The reflection $s\mapsto-s$ is also an exact isometry, so $s$ has zero
boundary mean for the full perturbed disk metric. The variational principle
and $c(\varepsilon)>1$ now give the claim.
\end{proof}

\section*{AI usage disclosure}

The authors made extensive use of ChatGPT (OpenAI, GPT-5.6) for mathematical
discussions and editorial assistance. In particular, it was used to explore
and test several families of Riemannian metrics on balls in dimensions three
and four before identifying the class of metrics that led to the proof of
Theorem~\ref{thm:main}. All mathematical proofs, computations, and references
appearing in the final manuscript were subsequently checked, revised, and
written in their final form by the authors.

\bibliographystyle{plain} 
\bibliography{BiblioEscobar}

\begin{thebibliography}{10}

\bibitem{Besse1987}
Arthur~L. Besse.
\newblock {\em Einstein Manifolds}, volume~10 of {\em Ergebnisse der Mathematik
  und ihrer Grenzgebiete}.
\newblock Springer-Verlag, Berlin, 1987.

\bibitem{ColboisGirouardGordonSher2024}
Bruno Colbois, Alexandre Girouard, Carolyn Gordon, and David Sher.
\newblock Some recent developments on the {Steklov} eigenvalue problem.
\newblock {\em Revista Matem{\'a}tica Complutense}, 37(1):1--161, 2024.

\bibitem{DuncanKumar}
J.~A.~J. Duncan and A.~Kumar.
\newblock The first {Steklov} eigenvalue on manifolds with non-negative {Ricci}
  curvature and convex boundary.
\newblock {\em J. Geom. Anal.}, 35:Art.~95, 2025.

\bibitem{Escobar1997}
Jos\'e~F. Escobar.
\newblock The geometry of the first non-zero {Stekloff} eigenvalue.
\newblock {\em J. Funct. Anal.}, 150:544--556, 1997.

\bibitem{Escobar1999}
Jos\'e~F. Escobar.
\newblock An isoperimetric inequality and the first {Steklov} eigenvalue.
\newblock {\em J. Funct. Anal.}, 165:101--116, 1999.

\bibitem{FraserLi2014}
Ailana Fraser and Martin Man-chun Li.
\newblock Compactness of the space of embedded minimal surfaces with free
  boundary in three-manifolds with nonnegative {Ricci} curvature and convex
  boundary.
\newblock {\em J. Differential Geom.}, 96(2):183--200, 2014.

\bibitem{GirouardPolterovich2017}
Alexandre Girouard and Iosif Polterovich.
\newblock Spectral geometry of the {Steklov} problem.
\newblock {\em Journal of Spectral Theory}, 7(2):321--359, 2017.

\bibitem{LevitinMangoubiPolterovich2023}
Michael Levitin, Dan Mangoubi, and Iosif Polterovich.
\newblock {\em Topics in Spectral Geometry}, volume 237 of {\em Graduate
  Studies in Mathematics}.
\newblock American Mathematical Society, Providence, RI, 2023.

\bibitem{Lott2012}
John Lott.
\newblock Mean curvature flow in a {R}icci flow background.
\newblock {\em Communications in Mathematical Physics}, 313(2):517--533, 2012.

\bibitem{SunWangWang2026}
Jin Sun, Lili Wang, and Tao Wang.
\newblock Counterexamples to {Escobar}'s conjecture, 2026.
\newblock arXiv:2608.23063.

\bibitem{XiaXiong}
Chao Xia and Changwei Xiong.
\newblock {Escobar}'s conjecture on a sharp lower bound for the first nonzero
  {Steklov} eigenvalue.
\newblock {\em Peking Math. J.}, 7:759--778, 2024.

\end{thebibliography}


\end{document}